\newtheorem{theorem}{Theorem}
\newtheorem{corollary}{Corollary}
\newtheorem{lemma}{Lemma}
\newtheorem*{question*}{question}
\newenvironment{nouppercase}{%
  \renewcommand{\uppercasenonmath}[1]{}}{}
\begin{document}

\title{\Large \rm On element orders in covers of $L_4(q)$ and $U_4(q)$ }

\author{M.A. Grechkoseeva, S.V. Skresanov}

\thanks{The reported study was funded by RFBR, project number 18-31-20011}

\begin{abstract}
	Suppose that $L$ is one of  the finite simple groups $\operatorname{PSL}_4(q)$ or $\operatorname{PSU}_4(q)$
	and $L$ acts on a vector space $W$ over a field whose characteristic divides $q$. We prove that the~natural semidirect product of 
	$W$ and $L$ contains an element whose order differs from the order of any element of $L$, thus answering 
	questions 14.60 and 17.73\,(a) of the~Kourovka Notebook.
	\smallskip

	\noindent
	{\bf Keywords:} simple linear group, simple unitary group, orders of elements, modular representation, defining characteristic.
	\smallskip

	\noindent
	{\bf MSC:} 20D06, 20D60, 20C20.
\end{abstract}
 
\begin{nouppercase}

\maketitle

\end{nouppercase}

\section{Introduction}

Problem 14.60 of the Kourovka Notebook \cite{Kou} 
asks whether a finite group $H$ having a~nontrivial normal subgroup $K$ such that 
the factor group $H/K$ is isomorphic to one of the simple groups $\operatorname{PSL}_n(q)$ with $n\geqslant 3$ always contains an element whose 
order is distinct from the order of any element of $H/K$. Zavarnitsine \cite{08Zav1.t} proved that this is true if $n\neq 4$
or $q$ is composite. Later he  \cite{08Zav2} constructed an example showing that this is not true if $n=4$ and $q=13^{24}$. 
It turned out that the proof in \cite{08Zav2} was incorrect, and the main goal of the present paper is to show that the answer is 
affirmative for all $n\geqslant 3$, including $n=4$. 

Before stating the main result, let us  put the question into a wider context of recognition by spectrum (see the introduction in \cite{15VasGr1} for a survey on this subject). The spectrum 
$\omega(G)$ of a finite group $G$ is the set of the orders of elements of $G$. We say that $G$ is recognizable by spectrum  if for every finite group $H$, the equality $\omega(H)=\omega(G)$ implies 
$H\simeq G$.  If the implication holds for all finite groups $H$ having a normal subgroup $K$ such that $H/K\simeq G$, 
then we say that $G$ is recognizable by spectrum among covers. In this language, Problem 14.60 asks whether the simple groups $\operatorname{PSL}_n(q)$ with $n\geqslant 3$ are recognizable by spectrum 
among covers. 

It is not hard to see that a finite group $G$ is recognizable among covers if and only if $\omega(G)\neq \omega(K:G)$, where $K\neq 1$ is an elementary abelian group  and $K:G$ is a split extension of 
$G$ by $K$ (see, for example, \cite[Lemma 12]{04Zav}), and so representations of $G$ come into play. If $G$ is a simple group of Lie type in characteristic $p$, it is natural to distinguish two 
cases, depending on whether $p$ divides $K$ or  not (cf. Problems 17.73 and 17.74 of \cite{Kou}). At the present time, the only open question related to recognizability of simple groups among covers 
is whether $\omega(G)\neq \omega(K:G)$ in the case when $G=\operatorname{PSL}_4(q)$ or $\operatorname{PSU}_4(q)$,
$q=p^m$ is odd and composite, and $K$ is an elementary abelian $p$-group (see the proof of Corollary \ref{c:1} 
below  for references). We answer this question in affirmative, thus solving Problems 14.60 and 17.73\,(a) of \cite{Kou}. 

\begin{theorem}\label{t:1}
	Let $L$ be $\operatorname{PSL}_4(q)$ or $\operatorname{PSU}_4(q)$, where $q$ is a power of an odd prime $p$.
	If $L$ acts on a vector space $W$ over a field of characteristic $p$ then $\omega(W\rtimes L)\neq\omega(L)$. 
\end{theorem}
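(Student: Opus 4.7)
The plan is to exhibit a semisimple element $s\in L$ of order $n$ coprime to $p$ satisfying two conditions:
(A) every nontrivial absolutely irreducible $\bar{\mathbb{F}}_pL$-module has a nonzero $s$-fixed vector, and
(B) $pn\notin\omega(L)$.
Once such an $s$ is in hand, for any nonzero characteristic-$p$ module $W$ the fixed space $W^{s}$ is nonzero: since $s$ is a $p'$-element it acts semisimply on $W\otimes_{\mathbb{F}}\bar{\mathbb{F}}_p$, so the $s$-character of that module equals the sum of $s$-characters on composition factors, and (A) applied to any nontrivial composition factor (or the tautology $W^{s}=W$ in case $L$ acts trivially on $W$) guarantees the eigenvalue $1$ occurs. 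Pick $w\in W^{s}\setminus\{0\}$ and set $g=(w,s)\in W\rtimes L$. A direct computation gives
\[
g^{n}=\Bigl((1+s+s^{2}+\cdots+s^{n-1})w,\,s^{n}\Bigr)=(nw,\,1),
\]
which is nonzero in $W$ (as $\gcd(n,p)=1$) and hence of order $p$, so $g$ has order $pn$. By (B) this order lies outside $\omega(L)$, whence $\omega(W\rtimes L)\neq\omega(L)$.

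Condition (B) is a statement about $\omega(L)$ and is verified using the Jordan-decomposition description of element orders in $L$: it suffices to choose $s$ of suitable order $n$ whose $L$-centralizer is a $p'$-group and to check that the same holds for every $L$-conjugacy class of semisimple elements of order $n$, since then no unipotent element can commute with a semisimple element of order $n$. The maximal tori of $L=\operatorname{PSL}_{4}(q)$ and $\operatorname{PSU}_{4}(q)$ are parametrized by (signed) partitions of $4$, their orders are explicit cyclotomic expressions in $q$, and combining this with the known description of centralizers of semisimple elements reduces (B) to an explicit arithmetic check on $n$ and $q$.

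The decisive step is condition (A). By Steinberg's tensor product theorem, writing $q=p^{m}$, every nontrivial absolutely irreducible $\bar{\mathbb{F}}_pL$-module has the form
\[
V(\lambda_{0})\otimes V(\lambda_{1})^{(p)}\otimes\cdots\otimes V(\lambda_{m-1})^{(p^{m-1})},
\]
with $\lambda_{i}$ restricted dominant weights for $A_{3}$, and the eigenvalues of $s\in T$ on such a module are the products $\prod_{i}\mu_{i}(s)^{p^{i}}$ for weights $\mu_{i}$ of $V(\lambda_{i})$. Condition (A) therefore requires that for every nontrivial tuple $(\lambda_{0},\ldots,\lambda_{m-1})$ at least one such product equals $1$. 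Reconciling this rigid combinatorial requirement with (B)---so that $n$ remains large enough to push $pn$ out of $\omega(L)$ yet $s$ still has a weight evaluating to $1$ in every restricted irreducible---is the technical heart of the proof, and is, I expect, exactly where the earlier attempt in \cite{08Zav2} broke down. I anticipate a careful choice of $s$ whose eigenvalues on the natural $4$-dimensional module satisfy specific multiplicative relations, perhaps with a case split between $q$ prime and $q$ properly composite and with separate treatment of a few small restricted irreducibles.
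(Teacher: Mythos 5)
Your high-level framework is the standard one and matches the paper's in outline: find a semisimple $p'$-element with a nonzero fixed vector on $W$, multiply by that vector to get an element of order $p\cdot|s|$, and arrange $p|s|\notin\omega(L)$ using the known arithmetic of element orders (the paper quotes \cite[Lemma 4]{08Zav2} for the admissible orders $r_4(\varepsilon q)$, $r_3(\varepsilon q)$, $(q^2-1)_2$ and $r_2(\varepsilon q)(q-\varepsilon 1)_2$, rather than re-deriving them from tori and centralizers). The fixed-point-implies-order-$pn$ computation and the reduction to composition factors are fine. But there are two problems. First, your condition (A) demands a \emph{single} element $s$ with a fixed vector on \emph{every} nontrivial irreducible module, and no such element of the required kind exists: an element whose order is a primitive prime divisor $r_4(\varepsilon q)$ has no eigenvalue $1$ on the natural module, while an element with eigenvalue $1$ and order $r_3(\varepsilon q)$ has none on $\Lambda^2V$ (its pair-products are nontrivial powers of $\theta$), and the element of order $(q^2-1)_2$ with eigenvalues $\theta,\theta^{\varepsilon q},-1,1$ likewise fails on $\Lambda^2V$ since $\theta^{1+\varepsilon q}=-1$. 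The paper's proof is organized precisely around choosing a \emph{different} element depending on the module: after Steinberg's tensor product theorem attaches parameters $k_i\in\{0,1,2,3\}$ to the factors of $W$, it uses $r_4(\varepsilon q)$ when all $k_i\in\{0,2\}$, $r_3(\varepsilon q)$ when no $k_i=2$, and in the remaining mixed case (which forces $q>p$) an element of $2$-power-times-$r_2$ order whose eigenvalue exponents $a,b$ are themselves solved for from congruences modulo $(q-\varepsilon 1)_2$ depending on the $2$-adic valuations of sums $A,B$ determined by the $k_i$. So the quantifiers in your plan must be reversed: "for each module there exists an element," not "there exists an element for all modules."

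Second, and decisively, you defer exactly the step that constitutes the proof. The mixed case $\{1,2\}$ or $\{2,3\}\subseteq\{k_i\}$ is where Zavarnitsine's argument broke down, and handling it requires the specific construction above together with the $2$-part bookkeeping of Lemma \ref{l:2part} (adjusting individual $\theta_{i,j}$ assignments to shift $(A)_2$ when $(A)_2=(B)_2$). Your proposal correctly locates this as "the technical heart" but supplies no candidate element, no weight bookkeeping (the paper needs the fact, from Lemma \ref{l:micro}, that each restricted factor either has the zero weight or contains all $k$-fold sums of the $\varepsilon_i$), and no argument. As written this is a plan for a proof, with its central claim both unestablished and, in the single-element form you state it, false.
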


\begin{corollary}\label{c:1}
	A finite nonabelian simple group $L$ is recognizable by spectrum among covers if and only if $L$ is neither  $^3D_4(2)$, nor
	$\operatorname{PSU}_5(2)$, nor $\operatorname{PSU}_3(q)$, where $q$ is a Mersenne prime such that $q^2-q+1$ is a prime.
\end{corollary}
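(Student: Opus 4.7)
The plan is to assemble Corollary \ref{c:1} from results already in the literature together with Theorem \ref{t:1}. As noted in the introduction, \cite[Lemma 12]{04Zav} reduces recognizability among covers to the assertion $\omega(K\rtimes L)\neq\omega(L)$ for every nontrivial elementary abelian group $K$ carrying a faithful $L$-action. The proof of the corollary therefore splits into two halves: for each of the three listed groups one must exhibit a counterexample $K$ with $\omega(K\rtimes L)=\omega(L)$, and for every other finite nonabelian simple group one must verify the inequality for all admissible $K$.

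For the three listed exceptions I would simply cite the counterexample constructions already in the literature. For each of $^3D_4(2)$, $\operatorname{PSU}_5(2)$, and $\operatorname{PSU}_3(q)$ with $q$ a Mersenne prime such that $q^2-q+1$ is prime, a specific faithful module over an appropriate field is known to produce $\omega(K\rtimes L)=\omega(L)$; these three modules are the source of the exceptions in the statement.

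For the remaining groups I would split by the type of $L$. Alternating and sporadic groups have been treated exhaustively in earlier papers and introduce no further exceptions. For a simple group of Lie type in characteristic $p$, following the Kourovka dichotomy of Problems 17.73 and 17.74 in \cite{Kou}, I would separate the cross-characteristic case $p\nmid|K|$ from the defining-characteristic case in which $K$ is an elementary abelian $p$-group. The cross-characteristic side is settled in earlier work and yields only the three exceptions above. In the defining-characteristic case, every simple group of Lie type has been handled in the literature except $\operatorname{PSL}_4(q)$ and $\operatorname{PSU}_4(q)$ with $q$ an odd prime power; but these two families are exactly the content of Theorem \ref{t:1}, and applying that theorem with $W=K$ viewed as an $\mathbb{F}_pL$-module disposes of the last case.

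The main effort here is bookkeeping and reference-tracking rather than new mathematics, and the chief concern is ensuring that no simple group falls through a gap between the various partial results in the literature. All genuinely new input to the corollary is concentrated in Theorem \ref{t:1}. A minor convenience is that Theorem \ref{t:1} is stated for arbitrary, not necessarily irreducible, $W$, so no preliminary reduction to composition factors is required before invoking it.
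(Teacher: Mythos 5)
Your proposal is correct and follows essentially the same route as the paper: reduce to split extensions by elementary abelian modules, cite the literature for all cases except $\operatorname{PSL}_4(q)$ and $\operatorname{PSU}_4(q)$ with $q$ odd in the defining characteristic, and close that last gap with Theorem \ref{t:1} (which, as you note, needs no irreducibility reduction at the point of application). The paper's proof is exactly this bookkeeping, with the specific references filled in.
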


\section{Proof of the main result}

We start with some definitions and preliminary results.  
 
If $a$ is a nonzero integer, then the highest power of $2$ dividing $a$ is called the $2$-part of $a$ and is denoted by $(a)_2$. The following lemma is an easy consequence of the definition.  

\begin{lemma}\label{l:2part}
Let $a$ and $b$ be nonzero integers.
\begin{enumerate}
\item If $(a)_2\geqslant (b)_2$, then $(a+b)_2\neq (a)_2$.
\item If $(a)_2=(b)_2$, then $(a+b)_2>(a)_2$.
\end{enumerate}
\end{lemma}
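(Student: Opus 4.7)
The plan is to write $a=2^\alpha a'$ and $b=2^\beta b'$ with $a'$, $b'$ odd integers, so that $(a)_2=2^\alpha$ and $(b)_2=2^\beta$, and then simply compute $a+b$ in the two subcases permitted by the hypothesis $(a)_2\geqslant(b)_2$, namely $\alpha>\beta$ and $\alpha=\beta$.

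For the strict case $\alpha>\beta$, I would pull out a factor of $2^\beta$ to get $a+b=2^\beta(2^{\alpha-\beta}a'+b')$. Since $b'$ is odd while $2^{\alpha-\beta}a'$ is even, the inner factor is odd, so $(a+b)_2=2^\beta<2^\alpha=(a)_2$; in particular $(a+b)_2\neq(a)_2$, which is what (i) requires in this subcase.

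For the equal case $\alpha=\beta$ (which gives the content of (ii) and the remaining subcase of (i)), I would pull out $2^\alpha$ to get $a+b=2^\alpha(a'+b')$. The factor $a'+b'$ is a sum of two odd integers, hence even, and therefore $(a+b)_2\geqslant 2^{\alpha+1}>2^\alpha=(a)_2$. This yields the strict inequality of (ii) and, combined with the previous paragraph, completes (i).

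I do not expect any real obstacle: the lemma follows directly from the definition of the $2$-part once one splits on whether the valuations of $a$ and $b$ are equal. The only point requiring any care is that (i) has to be verified in both subcases of the hypothesis $(a)_2\geqslant(b)_2$, since the strict subcase and the equal subcase produce the inequality $(a+b)_2\neq(a)_2$ for opposite reasons (a decrease in one, an increase in the other).
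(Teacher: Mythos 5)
Your proof is correct and is exactly the direct valuation computation the paper has in mind; the paper in fact omits the proof entirely, calling the lemma ``an easy consequence of the definition,'' and your split into the subcases $\alpha>\beta$ and $\alpha=\beta$ is that intended argument. The only point worth a parenthetical remark is the degenerate possibility $a+b=0$ (which can occur when $\alpha=\beta$), where $(a+b)_2$ is undefined unless one adopts the convention $(0)_2=\infty$; the paper sidesteps this by explicitly checking in its applications that the relevant sums are nonzero, so nothing is lost.
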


The next lemma was proved by Bang \cite{86Bang}. Also it is a special case of Zsigmondy's theorem \cite{Zs}.

\begin{lemma}\label{l:zs}
Suppose that $\varepsilon \in\{+,-\}$ and $a,n\geqslant 2$ are integers. Then either there is a prime $r$ such that 
$r$ divides $a^n-(\varepsilon1)^n$ and does not divide $a^i-(\varepsilon1)^i$ for all $1\leqslant i<n$, or one of the following holds:
\begin{enumerate}
 \item $\varepsilon=+$, $n=6$, $a=2$;
 \item $\varepsilon=-$, $n=3$, $a=2$;
 \item $n=2$ and $a+\varepsilon1$ is a power of $2$. 
\end{enumerate}
\end{lemma}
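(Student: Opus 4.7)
The plan is to prove this via cyclotomic polynomials, the classical route to Zsigmondy's theorem (with Bang's case for $\varepsilon=-$). The first step is to reduce $\varepsilon=-$ to $\varepsilon=+$. Given an odd prime $r$ dividing $a^n-(\varepsilon 1)^n$ with $r\nmid a$, let $d=\mathrm{ord}_r(a)$. A short parity analysis of the conditions $r\nmid a^i-(-1)^i$ for $i<n$ shows, for $n\geq 3$, that $\varepsilon=-$ primitivity forces $d=2n$ when $n$ is odd, $d=n$ when $4\mid n$, and $d=n/2$ when $n\equiv 2\pmod 4$. Conversely, any prime with the prescribed value of $d$ is $\varepsilon=-$ primitive. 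So the question reduces to the existence of a standard ($\varepsilon=+$) primitive prime divisor of $a^N-1$ with $N\in\{2n,n,n/2\}$ respectively, and tracing back the $\varepsilon=+$ exception $N=6$, $a=2$ through the substitution $N=2n$ produces case (ii), $(n,a)=(3,2)$. The residual case $n=2$ is handled separately (below) and produces case (iii).

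For $\varepsilon=+$, I would use the factorization $a^n-1=\prod_{d\mid n}\Phi_d(a)$ into cyclotomic polynomials. The classical key fact is that any prime $r$ dividing $\Phi_n(a)$ either has $\mathrm{ord}_r(a)=n$ (making $r$ a primitive prime divisor, automatically $\equiv 1\pmod n$), or $r$ is the largest prime factor of $n$ and appears in $\Phi_n(a)$ with multiplicity exactly one. Consequently, a primitive prime divisor of $a^n-1$ exists precisely when $\Phi_n(a)$ strictly exceeds the largest prime factor of $n$.

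The core estimate is the lower bound $\Phi_n(a)\geq(a-1)^{\phi(n)}$, obtained from $|a-\zeta|\geq a-1$ for every $n$-th root of unity $\zeta$, with sharper refinements available by pairing complex-conjugate roots. Since the largest prime factor of $n$ is at most $n$, these bounds yield $\Phi_n(a)>n$ outside finitely many pairs, and explicit calculation on the small exceptions (say $n\leq 12$ with $a\leq 3$) isolates $(n,a)=(6,2)$ as the sole obstruction for $n\geq 3$. The case $n=2$ is treated directly for both signs: $a^2-1=(a-1)(a+1)$, and a primitive prime divisor (in either sense) is precisely an odd prime factor of $a+\varepsilon 1$, whose existence is equivalent to $a+\varepsilon 1$ not being a power of two. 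This yields case (iii).

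The main obstacle is the case $a=2$, where the naive lower bound $(a-1)^{\phi(n)}=1$ is useless; here one needs a sharper cyclotomic estimate together with explicit computation for small $n$ to confirm that $(6,2)$ is truly the only exception. A secondary difficulty is the parity bookkeeping in the $\varepsilon=-$ reduction, particularly verifying in the sub-case $n\equiv 2\pmod 4$ that the primes with $d=n/2$ do satisfy all the required non-divisibility conditions $r\nmid a^i+1$ for odd $i<n$, and that no prime with a different value of $d$ can be $\varepsilon=-$ primitive in that sub-case.
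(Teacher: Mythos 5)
The paper does not actually prove this lemma: it simply cites Bang and Zsigmondy, so your proposal is necessarily a different ``route'' --- namely, a reconstruction of the classical cyclotomic-polynomial proof of the cited theorem. Your skeleton is sound. The reduction of $\varepsilon=-$ to $\varepsilon=+$ via the multiplicative order $d=\operatorname{ord}_r(a)$ is correct: for $n\geqslant 3$ a short check confirms that $\varepsilon=-$ primitivity is equivalent to $d=2n$ ($n$ odd), $d=n$ ($4\mid n$), $d=n/2$ ($n\equiv 2\pmod 4$), and tracing the unique $\varepsilon=+$ exception $(N,a)=(6,2)$ through $N=2n$ does yield case (ii), while the other two congruence classes produce no new exceptions. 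The key fact about prime divisors of $\Phi_n(a)$ (order exactly $n$, or else the largest prime factor of $n$ occurring to the first power) is correctly stated for $n\geqslant 3$, and the $n=2$ analysis giving case (iii) for both signs is right. The only thin spot is the one you yourself flag: for $a=2$ the bound $(a-1)^{\phi(n)}$ collapses, and the argument genuinely needs a sharper estimate, e.g.\ one obtained from $\Phi_n(a)=\prod_{d\mid n}(a^d-1)^{\mu(n/d)}$, such as $\Phi_n(a)>a^{\phi(n)}\bigl(1-a^{-1}-a^{-2}-\cdots\bigr)$ suitably refined, or the standard $\Phi_n(a)\geqslant a^{\phi(n)/2}$ for $n>6$; without fixing such a bound, your proposed finite verification range ``$n\leqslant 12$, $a\leqslant 3$'' is not justified and is in fact too small to be asserted a priori (the usual arguments require checking somewhat larger $n$ when $a=2$). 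Since the result is classical and the paper itself delegates the proof entirely to the literature, this deferral is acceptable, but it is where all the real work of Zsigmondy's theorem lives.
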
 

We refer to the prime $r$ in Lemma \ref{l:zs} as a primitive prime divisor of $a^n-(\varepsilon1)^n$ and denote some primitive divisor, if any, by $r_n(\varepsilon a)$.

Let $F$ be the algebraic closure of a field of prime order $p$ and let $G=\operatorname{SL}_n(F)$. We will need some information about weights of rational finite dimensional $FG$-modules,
which will be called simply $G$-modules for brevity. All unexplained terminology can be found, for example, in \cite{03Jan}. We can choose the group $D$ of diagonal matrices in $G$ 
as a maximal torus of $G$. If $M$ is a $G$-module, then $\Omega(M)$ is the set of weights of $M$ (relative to $D$). The irreducible 
$G$-module with the highest weight $\lambda$ is denoted by $M(\lambda)$. If $V=F^n$ is the natural $G$-module with canonical basis $e_1,\dots,e_n$, then $e_i$ is a weight vector for each 
$i\in\{1,\dots,n\}$, and we denote the corresponding weight by $\varepsilon_i$.
The Frobenius map on $G$ is the map defined by $(a_{ij})\mapsto (a_{ij}^p)$. 
If $M$ is a $G$-module, then the composition of the corresponding representation and the $i$th power of Frobenius map is also a representation of $G$ 
on $M$, and we denote the corresponding module by $M^{(p^i)}$.

\begin{lemma}\label{l:micro}
Let $G=\operatorname{SL}_n(F)$ and let $M$ be an irreducible $G$-module with $p$-restricted highest weight. Then either $0\in\Omega(M)$ or there is a uniquely defined number 
$k\in\{1,\dots,n-1\}$ such that $\Omega(M)$ contains the set $\{\varepsilon_{i_1}+\varepsilon_{i_2}+\dots+\varepsilon_{i_k}\mid 1\leqslant i_1<i_2<\dots<i_k\leqslant n\}.$
\end{lemma}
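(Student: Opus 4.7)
The plan is to combine two inputs. Input (A): all weights of $M$ lie in a single coset of the root lattice, namely $\Omega(M)\subseteq\lambda+Q$. Input (B): Premet's theorem, which, for a $p$-restricted highest weight $\lambda$, asserts that $\Omega(M)$ contains every dominant weight $\mu\leqslant\lambda$ in the dominance order of weights. These should single out one of the weights $0,\omega_1,\dots,\omega_{n-1}$ as lying in $\Omega(M)$; the $W$-invariance of $\Omega(M)$ together with the fact that $W\omega_k=\{\varepsilon_{i_1}+\dots+\varepsilon_{i_k}\mid i_1<\dots<i_k\}$ then yields the statement.

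For $G=\operatorname{SL}_n(F)$, the quotient $P/Q$ is cyclic of order $n$, and under the natural isomorphism $P/Q\to\mathbb{Z}/n\mathbb{Z}$ the weights $0,\omega_1,\dots,\omega_{n-1}$ hit all cosets of $Q$ in $P$ bijectively. Hence at most one of them can lie in $\lambda+Q$, which gives the uniqueness of $k$. To identify the correct candidate, I would represent $\lambda$ as a tuple $(b_1,\dots,b_n)$ with $b_1\geqslant\dots\geqslant b_n=0$ (a valid choice of representative in $P$, since $(1,\dots,1)=0$), set $N=\sum b_i$, $k_0=N\bmod n$ and $m=(N-k_0)/n$, and take $\omega_{k_0}$ (with the convention $\omega_0:=0$) as the candidate; it lies in $\lambda+Q$ by construction.

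The key verification is $\omega_{k_0}\leqslant\lambda$, for then Premet's theorem delivers $\omega_{k_0}\in\Omega(M)$ and the proof is complete (the convention $\omega_0=0$ covering the alternative $0\in\Omega(M)$). Since $(m,\dots,m)=0$ in $P$, this inequality is equivalent to the dominance $\lambda\succeq\mu$ in the partial order of partitions of $N$, where $\mu=(m+1,\dots,m+1,m,\dots,m)$ has $k_0$ parts equal to $m+1$ and $n-k_0$ parts equal to $m$. I would verify this by passing to conjugate partitions (which reverse dominance): $\mu'$ consists of $m$ parts equal to $n$ followed by a single part equal to $k_0$, so $\sum_{i\leqslant j}\mu'_i$ equals $jn$ for $j\leqslant m$ and equals $N$ for $j\geqslant m+1$, while $\sum_{i\leqslant j}\lambda'_i=\sum_r\min(b_r,j)$ is manifestly bounded above by both $jn$ and $N$. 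Hence $\lambda'\preceq\mu'$, and therefore $\lambda\succeq\mu$, as required. The only technical point in writing this up cleanly will be pinning down a precise reference for Premet's theorem in the form directly applicable to $\operatorname{SL}_n$ here; once that is in hand, the rest is elementary manipulation with partitions.
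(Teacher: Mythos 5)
Your argument is correct. Bear in mind that the paper does not prove this lemma at all: it simply cites Lemmas 13 and 14 of Zavarnitsine's paper, and those lemmas rest on precisely the two inputs you identify, namely $\Omega(M)\subseteq\lambda+Q$ and Premet's theorem (which for type $A_{n-1}$ holds with no restriction on $p$, as $A_{n-1}$ admits no special primes --- worth a clause in the write-up). So your proposal is in substance a self-contained reconstruction of the cited proof rather than a new route. The one place where you do genuinely more work is the verification $\omega_{k_0}\leqslant\lambda$: the usual shortcut is to invoke the standard fact that each coset of $Q$ in $P$ contains a unique minimal dominant weight, and that for $A_{n-1}$ these minimal weights are exactly $0,\omega_1,\dots,\omega_{n-1}$ (zero and the minuscule weights); your conjugate-partition computation proves this special case from first principles, and it is sound: writing $\lambda=(b_1,\dots,b_n)$ with $b_n=0$ and $\mu=(m+1,\dots,m+1,m,\dots,m)$, the bound $\sum_{i\leqslant j}\lambda_i'=\sum_r\min(b_r,j)\leqslant\min(jn,N)=\sum_{i\leqslant j}\mu_i'$ gives $\lambda'\preceq\mu'$, hence $\mu\preceq\lambda$ in the dominance order, which for partitions of the same $N$ with at most $n$ parts is equivalent to $\lambda-\mu\in\mathbb{Z}_{\geqslant 0}\Delta$. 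The coset membership $\omega_{k_0}\in\lambda+Q$, the uniqueness of $k$ via $P/Q\simeq\mathbb{Z}/n\mathbb{Z}$, and the passage from $\omega_{k_0}\in\Omega(M)$ to its full $W$-orbit $\{\varepsilon_{i_1}+\dots+\varepsilon_{i_{k_0}}\}$ by $W$-invariance of weight sets all check out, so once you pin down the reference to Premet the proof is complete.
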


\begin{proof}
See Lemmas 13 and 14 in \cite{08Zav1.t}. 
\end{proof}

We proceed now to prove Theorem \ref{t:1}. It is convenient to write $\operatorname{PSL}_4^\pm(q)$,
and related notation such as $\operatorname{SL}^\pm_4(q)$, to denote linear and unitary groups with $+$
corresponding to linear groups and $-$ to the unitary groups. 
So let $L=\operatorname{PSL}_4^\varepsilon(q)$, where $\varepsilon\in\{+,-\}$ and $q=p^m$. Let 
$S=\operatorname{SL}_4^\varepsilon(q)$ and let $G=\operatorname{SL}_4(F)$, where $F$ is the algebraic closure of a field of order $p$, as above. 
We may regard $W$ as an $S$-module and by \cite[Lemmas 10 and 11]{08Zav1.t}, 
we may assume that $S$ acts on $W$ faithfully and absolutely irreducibly. By Steinberg's theorem \cite[Theorem 43]{68SteLec}, 
we also may assume 
that $W$ is the restriction to $S$ of an irreducible $G$-module $M(\lambda)$ for some $q$-restricted weight $\lambda$. By Steinberg's tensor product theorem \cite[Theorem 41]{68SteLec}, we have 
$$M(\lambda)\simeq M(\lambda_0)\otimes M(\lambda_1)^{(p)}\otimes \dots\otimes M(\lambda_{m-1})^{(p^{m-1})},$$
where $\lambda_i$ are $p$-restricted.  
By Lemma \ref{l:micro}, for 
each $i\in\{0,1,\dots, m-1\}$, there is $k_i\in\{0,1,2,3\}$ such that $\Omega(M(\lambda))$ contains all the weights of the 
form $$\mu_0+p\mu_1+\dots+p^{m-1}\mu_{m-1},$$ where $\mu_i$ is the zero weight if $k_i=0$, and $\mu_i$ is a sum of distinct $k_i$ elements of $\{\varepsilon_1,\dots,\varepsilon_{4}\}$ 
if $k_i>0$.

To prove that $\omega(W\rtimes L)\neq\omega(L)$, it is sufficient to find a semisimple element $g\in S$ such that $\langle g\rangle \cap Z(S)=1$, $p|g|\not\in\omega(L)$ and 
for each $i\in\{0,1,\dots, m-1\}$ with $k_i>0$, there are $k_i$ distinct characteristic values $\theta_{i,1},\dots,\theta_{i,k_i}$ of $g$ such that 
\begin{equation}\label{e:1}\prod_{i:\,k_i>0}(\theta_{i,1}\dots\theta_{i,k_{i}})^{p^{i}}=1.\end{equation} Indeed, there is $x\in G$ such that $h=g^x$ is 
a diagonal matrix. By \eqref{e:1} and the preceding paragraph, there is $\mu\in\Omega(M(\lambda))$ such that $\mu(h)=1$. It follows that $h$ has a 
nontrivial fixed point in $W$, and so too does $g$. Hence $p|g|\in\omega(W\rtimes L)\setminus\omega(L)$.

By \cite[Lemma 4]{08Zav2}, if $|g|$ is one of the following numbers, then   $p|g|\not\in\omega(L)$: 
\begin{enumerate}
 \item $r_4(\varepsilon q)$, $r_3(\varepsilon q)$, $(q^2-1)_2$;
 \item $r_2(\varepsilon q)(q-\varepsilon1)_2$ if $3<q\equiv \varepsilon1\pmod 4$.
\end{enumerate}
Observe that all the above primitive divisors exist by Lemma \ref{l:zs}.

Let $\theta\in F^\times$ have order $r_4(\varepsilon q)$ and let $g\in S$ be an element  whose characteristic values are $\theta$, $\theta^{\varepsilon q}$,
$\theta^{q^2}$, $\theta^{\varepsilon q^3}$. It is clear that $\theta^{q^2+1}=1$, $|g|=r_4(\varepsilon q)$, and $\langle g\rangle \cap Z(S)=1$.
If $k_i\in\{0,2\}$ for all $i$ and $\theta_{i,1}=\theta$, $\theta_{i,2}=\theta^{q^2}$ for $i$ with $k_i=2$, then \eqref{e:1} holds.  

Let $\theta\in F^\times$ have order $r_3(\varepsilon q)$ and let $g\in S$ be an element  whose characteristic values are $\theta$, $\theta^{\varepsilon q}$,
$\theta^{q^2}$, $1$.  If $k_i\neq 2$ for any $i$ then taking $\theta_{1,1}=1$ for $i$ with $k_i=1$
and $\theta_{i,1}=\theta$, $\theta_{i,2}=\theta^{\varepsilon q}$, $\theta_{i,3}=\theta^{q^2}$ for $i$ with $k_i=3$ gives us the desired result.

Thus we may assume that both $2$ and $1$, or both $2$ and $3$, occur among the numbers $k_i$. In particular, $q>p$. 

Let $q\equiv -\varepsilon\pmod 4$ and choose $\theta\in F^\times$ of order $(q^2-1)_2$. Observe that $\theta^{q+\varepsilon}=-1$ since $(q^2-1)_2=2(q+\varepsilon)_2$.
Let $g\in S$ be an element whose characteristic values are $\theta$, $\theta^{\varepsilon q}$, $-1$, $1$. For all $i$ with $k_i>0$, we set $\theta_{i,1}=1$.
If $k_i=2$, then $\theta_{i,2}=-1$, and if $k_i=3$, then $\theta_{i,2}=\theta$, $\theta_{i,3}=\theta^{\varepsilon q}$. Then the left side of \eqref{e:1} is equal to $1$ or
$-1$. If it is equal to $-1$, then we replace $\theta_{i,1}=1$ by $\theta_{i,1}=-1$ for some $i$ with $k_i\in\{1,3\}$.

Let $q\equiv \varepsilon\pmod 4$ and let $\theta\in F^\times$ have order $t=r(q-\varepsilon)_2$, where $r=r_2(\varepsilon q)$. Take $g\in S$ to be an element 
whose characteristic values are $\theta^a$, $\theta^{\varepsilon aq}$, $\theta^{rb}$, $\theta^{-a(1+\varepsilon q)-rb}$, where $a$ and $b$ are integers and $a$ is coprime to $r$.
Since $\theta^{t/2}=-1$, the characteristic values of $g^{t/2}$ are $(-1)^a$, $(-1)^a$, $(-1)^b$, $(-1)^b$. So if $a$ and $b$ have opposite parity, then 
$|g|=t$ and $\langle g\rangle \cap Z(S)=1$.  

If $k_i=2$, then set $\theta_{i,1}=\theta^{a}$ and $\theta_{i,2}=\theta^{a\varepsilon q}$. If $k_i=1$, then set $\theta_{i,1}=\theta^{rb}$. If $k_i=3$, then
set $\theta_{i,1}=\theta^a$, $\theta_{i,2}=\theta^{\varepsilon aq}$, and $\theta_{i,3}=\theta^{-a(1+\varepsilon q)-rb}$. Then the corresponding factor in the left side of \eqref{e:1} 
is equal to $\theta^{a(1+\varepsilon q)p^i}$, $\theta^{rbp^i}$, or $\theta^{-rbp^i}$ respectively. It follows that the product in the left side of \eqref{e:1} is 
equal to $\theta^c$, where 
 $$c=a(1+\varepsilon q)(p^{i_1}+\dots+p^{i_j})+rb(\tau_{j+1}p^{i_{j+1}}+\dots+\tau_{l}p^{i_l})$$
for some $\tau_{j+1},\dots,\tau_l\in\{+,-\}$. Define $A=(1+\varepsilon q)(p^{i_1}+\dots+p^{i_j})$ and $B=\tau_{j+1}p^{i_{j+1}}+\dots+\tau_{l}p^{i_l}$. 
Observe that $A$ is a nonzero integer since $j>0$ and $i_1,\dots, i_j$ are different positive integers. By similar reasons, $B$ is also nonzero.

It is clear that $r$ divides $c$, and hence $\theta^c=1$ if and only if  
\begin{equation}\label{e:2} aA+rbB\equiv 0\pmod{(q-\varepsilon)_2}.\end{equation}
If $(A)_2<(B)_2$, then we set $b=1$ and take $a$ to be a solution of the congruence 
$$aA/(A)_2\equiv -rB/(A)_2\pmod {(q-\varepsilon)_2}$$
coprime to $r$ (we can choose such a solution because  $r$ does not divide $q-\varepsilon$).
Since  both $B/(A)_2$ and $(q-\varepsilon)_2$ are even, while $A/(A)_2$ is odd, the number $a$ is even, as required.  Similarly, if $(A)_2>(B)_2$, then we set $a=1$ and take $b$ to be a solution of 
the congruence $$rbB/(B)_2\equiv -A/(B)_{2}\pmod {(q-\varepsilon)_2}.$$ 

Let $(A)_2=(B)_2$. Suppose that for some $i$ with $k_i=2$, we replace  $\theta_{i,1}=\theta^{a}$, $\theta_{i,2}=\theta^{\varepsilon aq}$ by $\theta_{i,1}=\theta^{rb}$,  
$\theta_{i,2}=\theta^{-a(1+\varepsilon q)-rb}$. Then the corresponding factor in the left side of \eqref{e:1} changes from $\theta^{a(1+\varepsilon q)p^i}$ to $\theta^{-a(1+\varepsilon q)p^i}$,
and so $A$ is decreased by $2(1+\varepsilon q)p^i$, while $B$ is unchanged. Observe that $A-2(1+\varepsilon q)p^i$ is still nonzero and divisible by $r$. If $(A)_2\geqslant 
2(1+\varepsilon q)_2=4$, then $(A-2(1+\varepsilon q)p^i)_2\neq (A)_2$ by Lemma \ref{l:2part}, and we can proceed as in the case $(A)_2\neq (B)_2$. 

We are left with the case $(A)_2=(B)_2=2$. If for some $i$ with $k_i=1$, we replace $\theta_{i,1}=\theta^{rb}$ by $\theta_{i,1}=\theta^{-a(1+\varepsilon q)-rb}$,
this decreases $A$ and $B$ by $(1+\varepsilon q)p^i$ and $2rp^i$ respectively. Similarly, if there is $i$ with $k_i=3$, then we can increase  
$A$ and $B$ by the corresponding amounts replacing $\theta_{i,3}=\theta^{-a(1+\varepsilon q)-rb}$ by $\theta_{i,3}=\theta^{rb}$. By Lemma \ref{l:2part}, we have $(A\pm(1+\varepsilon q)p^i)_2>(A)_2$, 
and so the previous argument goes through. 
The proof of the theorem is complete.

\medskip

It remains to prove the corollary. If $L$ is neither $\operatorname{PSL}_4(q)$, $\operatorname{PSU}_3(q)$, $\operatorname{PSU}_4(q)$, $\operatorname{PSU}_5(2)$, nor $^3D_4(2)$, then $L$ is recognizable among covers by \cite[Corollary 1.1]{15Gr}. For $\operatorname{PSU}_3(q)$, 
$\operatorname{PSU}_5(2)$ and $^3D_4(2)$, the result is proved in \cite{06Zav.t}, \cite[Proposition 2]{11Gr} and \cite{13Maz.t} respectively. 
If $q$ is even then $\operatorname{PSL}_4(q)$ and $\operatorname{PSU}_4(q)$ are recognizable by 
spectrum \cite[Theorem 1]{08MazChe.t}. Thus we may assume that $L=\operatorname{PSL}_4(q)$ or $\operatorname{PSU}_4(q)$ with $q$ odd. As we mentioned in Introduction, it suffices to show that $\omega(L)\neq\omega(K\rtimes L)$ for 
every elementary abelian group $K\neq 1$ with $L$-action. If $p$ and $|K|$ are coprime, the result follows from  \cite[Lemma 11]{08Zav1.t} for linear groups and \cite[Theorem 1]{11Gr} for unitary 
groups. If $p$ divides the order of $K$,  we apply Theorem \ref{t:1}.

\bigskip

{\footnotesize
\noindent{\sl Mariya A. Grechkoseeva\\
	Sobolev Institute of Mathematics, 4 Acad. Koptyug avenue,\\
	Novosibirsk 630090, Russia\\
	e-mail: grechkoseeva@gmail.com
}
\bigskip

\noindent{\sl Saveliy V. Skresanov\\
	Sobolev Institute of Mathematics, 4 Acad. Koptyug avenue,\\
	Novosibirsk State University, 1 Pirogova Str.,\\
	Novosibirsk 630090, Russia\\
	e-mail: skresan@math.nsc.ru
}}

\end{document}